\documentclass[a4paper,11pt]{article}
\usepackage{amssymb,amsmath,amsthm,amscd}
\usepackage{graphics,graphicx,color}
\usepackage{latexsym}
\usepackage{srcltx, bm}
\DeclareGraphicsExtensions{.eps} \textwidth=169mm
\textheight=235mm \oddsidemargin=4mm \topmargin=-10mm
\hoffset=-0.4cm

\definecolor{darkred}{rgb}{0.9,0.1,0.1}

\newtheorem{theorem}{Theorem}[section]
\newtheorem{lemma}{Lemma}[section]

\newtheorem{remark}{Remark}[section]
{\rm}
\definecolor{darkred}{rgb}{0.9,0.1,0.1}

\definecolor{darkred}{rgb}{0.9,0.1,0.1}

\begin{document}

\title{H\"older estimates for solutions of parabolic SPDEs.}

\setcounter{footnote}{2}
\author{S.B. Kuksin\thanks{
Institut de Math\'emathiques de Jussieu--Paris Rive Gauche, CNRS, Universit\'e Paris Diderot, UMR 7586, Sorbonne Paris Cit\'e, F-75013, Paris, France \& School of Mathematics, Shandong University, Jinan, Shandong,  China,
{\tt Sergei.Kuksin@imj-prg.fr}}, \ N.S. Nadirashvili\thanks{
Aix-Marseille Universit\'e, 39, rue F. Joliot-Curie, 13453 Marseille, France,
{\tt  nikolay.nadirashvili@univ-amu.fr}
}
\  and A.L. Piatnitski\thanks{{The Arctic University of Norway, campus Narvik, P.O.Box 385, 8505 Narvik, Norway
\&
Institute for Information Transmission Problems of RAS, Bolshoi Karetny per. 19, 127051, Moscow, Russia},
{\tt apiatnitski@gmail.com}
} 
}
%
%

\date{}

\maketitle

\begin{abstract}
This paper considers second-order stochastic partial differential equations with additive
noise given in a bounded domain of $\mathbb R^n$. We suppose that the coefficients of the noise are
$L^p$-functions with sufficiently large $p$. We prove that the solutions are H\"older-continuous functions
almost surely (a.s.) and that the respective H\"older norms have finite momenta of any order.
\end{abstract}

\medskip\noindent
{\bf Keywords}: \ stochastic equation, H\"older-continuous function.


\section{Introduction}
This paper is devoted to $L^\infty$-
 and H\"older estimates for solutions
of second-order parabolic stochastic partial differential equations (SPDEs) in bounded domains,
under the minimal regularity assumptions on the coefficients. In recent years several
works focused on the $L^p$- and $W^{l,p}$-theory for parabolic SPDEs; however, the $W^{l,p}$-estimates
obtained in these works rely on rather strong regularity assumptions on the coefficients. On
the other hand, in various qualitative and applied problems the estimates in terms of $L^\infty$-
or even $L^p$-norms of the coefficients are very important. Major progress in this area was
achieved by Krylov in \cite{Kry}, where the regularity assumptions were reduced essentially for
equations posed in the whole $R^d$.

In our work we propose another technique which allows us to further reduce the regularity
assumptions and to treat initial-boundary problems for solutions of SPDEs with additive
noise. Although this class of equations is smaller than that in \cite{Kry}, still it is sufficiently large
and contains many important operators. Our approach differs essentially from that in \cite{Kry}
and gives rise to a shorter proof of the main results.

In this paper we study SPDEs of the form
\begin{equation}\label{ori_spde}
\begin{array}{c}
\displaystyle
  du = Au dt +\sum\limits_{j=1}^N f^j(x,t) dw_t^j,\qquad (x,t)\in Q\times(0,+\infty),\\
  \displaystyle
  u|_{t=0}=0,\qquad u|_{\partial Q}=0,
  \end{array}
\end{equation}
where $Q$ is a smooth bounded domain in $\mathbb R^n$, $w^j_t$
 are standard independent one-dimensional
Brownian motions, and $f^j(x, t)$ are progressively measurable $L^\infty$- 
or $L^p$-functions; in the
latter case $p$ is sufficiently large. Regarding the operator $A$ we assume that $A$ is either a
generic second-order uniformly elliptic operator with sufficiently smooth coefficients or is a
divergence form uniformly elliptic operator with measurable bounded coefficients.
In this paper we prove that a solution of \eqref{ori_spde} is a H\"older-continuous function a.s. and
that the corresponding $L^\infty$- 
and H\"older norms have finite momenta of any order.

For a stochastically perturbed heat equation (i.e., for \eqref{ori_spde} with $A = \Delta$) with periodic
boundary conditions these estimates were proved by one of the authors in the appendix of \cite{Kuks}.
It should be noted that the results of this paper can be generalized to the equations
involving the infinite-dimensional stochastic term
$\sum_{j=1}^\infty f^j(x,t)dw_t^j$
instead of the finite-dimensional
one; $L^\infty$-convergence (or $L^p$-convergence with sufficiently large $p$) of the sum 
$\sum_{j=1}^\infty |f^j(x,t)|$ is then required.

On the basis of the $L^\infty$-estimates obtained in this work, one can examine the regularity
of solutions of some nonlinear SPDEs and prove the solvability of these equations in certain
functional spaces (see \cite{Kuks}). When analyzing SPDEs, the $L^\infty$-estimates of this type can
be used in the same way as the classical maximum principle in the case of deterministic
equations.

In the first part of the paper we obtain several estimates for H\"older and $L^\infty$-norms
of solutions of deterministic parabolic equations. In what follows we make use of these
estimates to derive point-wise bounds for increments of the solution of \eqref{ori_spde} and to estimate
the corresponding function norms.

\section{ Setting of the problem. Technical estimates.}
\label{sec_setting}

Let $w^j_t$  be standard independent
Brownian motions defined on a probability space $(\Omega,\mathcal{F},\mathbf{P})$,  and let $(\mathcal{F}_t)$ be the
corresponding filtration of $\sigma$-algebras. We assume that $f^j(x, t)$ are $\mathcal{F}\times \mathcal{B}$-measurable,
 $(\mathcal{F}_t)$-adapted functions, where $\mathcal{B}$ stands for the Borel $\sigma$-algebra, and that at least one of the
following two assumptions holds:
$$
\begin{array}{lrrlll}
\mathbf{B}^\infty&\!\!\!:&\ \ |f(x, t)|&\!\!\! \leqslant 1 &\hbox{ almost surely (a.s.) for all $x$ and $t$},\\[2mm]
\mathbf{B}^p&\!\!\!:&\ \  \|f(·, t)\|_{L^p(Q)}&\!\!\! \leqslant 1 &\hbox{ a.s. for every } t.
\end{array}
$$
In the following two statements, $A$ is a uniformly elliptic second-order differential operator
of the form
$$
A = a_{ij}(x)\frac\partial{\partial x_i}\frac\partial{\partial x_j}
+ b_i(x))\frac\partial{\partial x_i} + c(x)
$$
with smooth coefficients.

\begin{theorem}\label{thm1}
Let $\mathbf{B}^\infty$ hold. Then under the above conditions\\
(a) for any $k\geqslant 0$ and any $T\geqslant 0$, a solution of problem \eqref{ori_spde} satisfies the estimate
\begin{equation}\label{est1}
  \mathbf{E}\|u\|_{L^\infty(Q_T)}^k\leqslant C_k\exp(\overline{c} T),
\end{equation}
where $Q_T = Q \times [T,T + 1]$ and $\overline{c} = \max(0, \max_Q c(x))$;\\
(b) the solution $u(x, t)$ is H\"older-continuous a.s., and for any $T\geqslant 0$, $k \geqslant 0$, and $\theta < \frac12$
the upper bound
\begin{equation}\label{est2}
\mathbf{E}
\|u\|_{C^\theta(Q_T)}^k
\leqslant C(k,\theta) \exp(\overline{c}T)
\end{equation}
holds.
\end{theorem}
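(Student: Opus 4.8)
\medskip\noindent\textbf{Proof strategy.}
The approach I would take is to work with the mild form of the solution and to couple the Burkholder--Davis--Gundy (BDG) inequality with the deterministic parabolic estimates of Section~\ref{sec_setting}. Let $\{e^{tA}\}_{t\ge 0}$ be the analytic semigroup generated by $A$ with the homogeneous Dirichlet condition on $Q$; then \eqref{ori_spde} has the solution
\begin{equation*}
u(x,t)=\sum_{j=1}^{N}\int_{0}^{t}K_{t}^{j}(x,s)\,dw_{s}^{j},\qquad K_{t}^{j}(x,s):=\bigl[e^{(t-s)A}f^{j}(\cdot,s)\bigr](x).
\end{equation*}
By the maximum principle $\|e^{\tau A}\|_{L^{\infty}(Q)\to L^{\infty}(Q)}\le e^{\overline{c}\tau}$, and the substitution $u=e^{\overline{c}t}v$ turns \eqref{ori_spde} into an equation of the same form whose zeroth order coefficient is $c(x)-\overline{c}\le 0$ and whose noise amplitudes $e^{-\overline{c}s}f^{j}$ still obey $\mathbf{B}^{\infty}$; this substitution produces exactly the factor $\exp(\overline{c}T)$ in \eqref{est1}--\eqref{est2}. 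Hence it suffices to treat the case $c\le 0$ and to prove both bounds \emph{uniformly in} $T$. In that case $e^{\tau A}$ is an $L^{\infty}$-contraction, has exponentially decaying operator norm on each $L^{r}(Q)$ (the principal Dirichlet eigenvalue of $A$ being negative), and obeys parabolic smoothing bounds of the type recorded in Section~\ref{sec_setting}. Finally, since $\|\cdot\|_{L^{\infty}(Q_{T})}\le\|\cdot\|_{C^{\theta}(Q_{T})}$, part (a) follows from part (b); it can also be obtained directly, by estimating $\mathbf{E}|u(x,t)|^{k}$ via BDG and the decay of $e^{\tau A}$ and passing to the spatial supremum either by the stochastic factorization method of \cite{Kuks} or by the increment bounds below.

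For part (b) the plan is to bound the increments of $u$ pointwise and then invoke Kolmogorov's continuity theorem. Fix $z_{i}=(x_{i},t_{i})\in Q_{T}$ with $|z_{1}-z_{2}|\le 1$. For a time increment ($x_{1}=x_{2}=:x$, $t_{1}<t_{2}$, $h:=t_{2}-t_{1}$) I split the stochastic integral as
\begin{equation*}
u(x,t_{2})-u(x,t_{1})=\sum_{j}\int_{t_{1}}^{t_{2}}K_{t_{2}}^{j}(x,s)\,dw_{s}^{j}+\sum_{j}\int_{0}^{t_{1}}\bigl(K_{t_{2}}^{j}(x,s)-K_{t_{1}}^{j}(x,s)\bigr)\,dw_{s}^{j},
\end{equation*}
where $K_{t_{2}}^{j}(x,s)-K_{t_{1}}^{j}(x,s)=\bigl[(e^{hA}-I)e^{(t_{1}-s)A}f^{j}(\cdot,s)\bigr](x)$. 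Applying BDG, $\mathbf{B}^{\infty}$, the contractivity of $e^{\tau A}$ and the deterministic H\"older-in-time bound $\|(e^{hA}-I)e^{\tau A}\|_{L^{\infty}\to L^{\infty}}\le C(h/\tau)^{\beta}e^{-\lambda\tau}$ ($\beta\in[0,1)$), the $k$-th moments of the two terms are $\le C_{k}h^{k/2}$ and $\le C_{k}\bigl(h^{2\beta}\int_{0}^{t_{1}}(t_{1}-s)^{-2\beta}e^{-2\lambda(t_{1}-s)}\,ds\bigr)^{k/2}\le C_{k,\beta}h^{\beta k}$ respectively, the last integral being finite precisely because $\beta<\tfrac12$; so $\mathbf{E}|u(x,t_{2})-u(x,t_{1})|^{k}\le C_{k,\beta}|t_{2}-t_{1}|^{\beta k}$ for every $\beta<\tfrac12$ and $|t_{2}-t_{1}|\le 1$. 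For a space increment ($t_{1}=t_{2}=:t$) BDG combined with the smoothing bound $\|e^{\tau A}g\|_{C^{\delta}(\overline{Q})}\le C\tau^{-\delta/2}e^{-\lambda\tau}\|g\|_{L^{\infty}(Q)}$ ($\delta\in[0,1)$) gives $\mathbf{E}|u(x_{1},t)-u(x_{2},t)|^{k}\le C_{k,\delta}|x_{1}-x_{2}|^{\delta k}$. Choosing $\delta\ge\beta$ and combining the two, one obtains, \emph{uniformly in} $T$,
\begin{equation*}
\mathbf{E}\,|u(z_{1})-u(z_{2})|^{k}\le C(k,\beta)\,|z_{1}-z_{2}|^{\beta k},\qquad |z_{1}-z_{2}|\le 1 .
\end{equation*}
Given $\theta<\tfrac12$, pick $\beta\in(\theta,\tfrac12)$ and $k>(n+1)/(\beta-\theta)$: Kolmogorov's theorem then yields a modification of $u$ lying a.s.\ in $C^{\theta}(Q_{T})$ with $\mathbf{E}\|u\|_{C^{\theta}(Q_{T})}^{k}\le C(k,\theta)$, once the elementary bound $\mathbf{E}|u(x_{0},T)|^{k}\le C_{k}$ (BDG and the decay of $e^{\tau A}$) is added to control the supremum part of the norm. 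H\"older's inequality in $\omega$ extends this to all $k\ge 0$, and undoing the substitution $u=e^{\overline{c}t}v$ gives \eqref{est2}, and hence also \eqref{est1}.

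The step I expect to be the main obstacle is the deterministic side, together with the bookkeeping of the exponents. The estimates of Section~\ref{sec_setting} must be sharp enough to provide a time-H\"older increment bound with exponent arbitrarily close to $\tfrac12$ and a spatial one with exponent arbitrarily close to $1$, and to do so uniformly up to the boundary $\partial Q$; this is where the smoothness of $Q$ and of the coefficients of $A$ enters, and the number $\tfrac12$ in the theorem is nothing but the integrability threshold $\int_{0}\tau^{-2\beta}\,d\tau<\infty\iff\beta<\tfrac12$ that surfaces in the time-increment estimate. A secondary point is keeping the $T$-dependence down to the stated exponential factor: one exploits the exponential decay of $e^{\tau A}$ for $c\le 0$ (equivalently, looks back only over a bounded time interval), so that the length of $[0,t]$ never enters the constants. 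Finally, both the almost sure H\"older continuity and the passage from the pointwise increment bounds to bounds on $\|u\|_{C^{\theta}(Q_{T})}$ are supplied by Kolmogorov's theorem itself, or alternatively by the factorization representation, which directly exhibits a continuous version of $u$.
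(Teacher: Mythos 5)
Your proposal is correct and follows essentially the same route as the paper: the mild-solution representation, the splitting of the time increment into $I_1+I_2$, and the combination of the Burkholder--Davis--Gundy inequality with the deterministic smoothing bounds of Lemma~\ref{lem1} to obtain $\mathbf{E}|u(z_1)-u(z_2)|^k\leqslant C|z_1-z_2|^{\theta k}$ for $\theta<\tfrac12$ are exactly the steps leading to \eqref{est15}. The only cosmetic difference is that where you invoke the quantitative Kolmogorov continuity theorem to pass from increment moments to moments of the $L^\infty$- and $C^\theta$-norms, the paper carries out the underlying dyadic chaining argument explicitly (Lemmas~\ref{lem3}--\ref{lem5}).
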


In estimates \eqref{est1}, \eqref{est2}, and everywhere below, for brevity we write $|u|_{L^\infty(Q_T )}$ instead of
$|(u |_{Q_T} )|_{L^\infty(Q_T )}$, etc.
\begin{theorem}\label{thm2}
 If $\mathbf{B}^p$ holds for some $p > d$, then\\
(a) for any $k \geqslant 0$ and any $T \geqslant 0$, a solution of problem \eqref{ori_spde} satisfies the estimate \eqref{est1};\\
(b) the solution $u(x, t)$ is H\"older-continuous a.s., and for any $T \geqslant 0$, $k \geqslant 0$, and
$\theta < 1/2-d/(2p)$ the upper bound \eqref{est2} holds.
\end{theorem}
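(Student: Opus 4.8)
\medskip
\noindent\emph{Proof strategy (sketch).}
I would deduce part~(a) from part~(b): since $p>d$ the number $\tfrac12-\tfrac d{2p}$ is strictly positive, so one may fix $\theta$ in that range, and then $\|u\|_{L^\infty(Q_T)}\le\|u\|_{C^\theta(Q_T)}$ turns \eqref{est2} into \eqref{est1}. For~(b) the plan is to run the scheme that proves Theorem~\ref{thm1}, feeding in an $L^p$-based deterministic estimate in place of the $L^\infty$-based one; the gap $\tfrac d{2p}$ between the two H\"older exponents will come out as the (arbitrarily small, after tuning) cost of passing from a fractional Sobolev space $W^{\gamma,p}(Q)$ to $C(\overline Q)$, which is precisely what $p>d$ buys. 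Write $S(\cdot)$ for the analytic semigroup generated by $A$ on $L^p(Q)$ with homogeneous Dirichlet condition; the mild solution of \eqref{ori_spde} is the stochastic convolution
\[
u(t)=\sum_{j=1}^N\int_0^t S(t-s)\,f^j(\cdot,s)\,dw^j_s .
\]
The substitution $u(x,t)\mapsto e^{-\overline c\,t}u(x,t)$, i.e.\ $A\mapsto A-\overline c$, makes the zeroth-order coefficient $\le0$, so that $S(\cdot)$ decays exponentially on the bounded domain $Q$; this is what produces the factor $\exp(\overline cT)$ in \eqref{est1}--\eqref{est2} and reduces everything to proving estimates with constants independent of $T$.

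The core of the argument is the factorisation identity: for a parameter $\alpha\in(0,\tfrac12)$,
\[
u(t)=c_\alpha\int_0^t (t-s)^{\alpha-1}\,S(t-s)\,Y(s)\,ds,\qquad
Y(s)=\sum_{j=1}^N\int_0^s (s-r)^{-\alpha}\,S(s-r)\,f^j(\cdot,r)\,dw^j_r .
\]
Using the Burkholder--Davis--Gundy inequality in $L^p(Q)$ (available since $1<p<\infty$), the smoothing--decay bound $\|S(\sigma)g\|_{W^{\gamma,p}(Q)}\le C\,\sigma^{-\gamma/2}e^{-\omega\sigma}\|g\|_{L^p(Q)}$ for the Dirichlet semigroup, and the hypothesis $\mathbf B^p$, I would show that for every $k<\infty$ and every $\gamma\ge0$ with $\gamma+2\alpha<1$,
\[
\sup_{s\ge0}\ \mathbf E\,\|Y(s)\|_{W^{\gamma,p}(Q)}^k\le C(k,\alpha,\gamma).
\]
Here $\alpha<\tfrac12$ is what makes the stochastic integral defining $Y$ converge, $\gamma+2\alpha<1$ is what makes the time integral $\int_0^s(s-r)^{-2\alpha-\gamma}e^{-2\omega(s-r)}dr$ finite and bounded uniformly in $s$, the Dirichlet condition is preserved by $S(\cdot)$, and smoothness of $Q$ and of the coefficients gives the semigroup estimates up to $\partial Q$.

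It remains to insert this into the convolution. Pathwise $u=\mathcal R_\alpha Y$, where $\mathcal R_\alpha\colon g\mapsto c_\alpha\int_0^t(t-s)^{\alpha-1}S(t-s)g(s)\,ds$ is a fixed deterministic linear operator, to which the deterministic parabolic regularity estimates established earlier in the paper apply. The tuning that yields the sharp exponent is to take $\gamma$ just below $1-2\alpha$: then reaching $L^\infty(Q)$ from $W^{\gamma,p}(Q)$ costs $S(\cdot)$ only an extra factor $\sigma^{-\delta/2}$ with $\delta>0$ as small as one wishes (it suffices that $\gamma+\delta>d/p$, and $\gamma$ is already close to $1-2\alpha$), so the kernel in $\mathcal R_\alpha$ is essentially $(t-s)^{\alpha-1}e^{-\omega(t-s)}$; consequently the spatial and Euclidean-time increments of $\mathcal R_\alpha Y$ are bounded by $|x-x'|^{\gamma+2\alpha-d/p-\delta}$ and $|t-t'|^{\alpha-\delta/2}$ respectively, times $\sup_{s\ge0}\big(\mathbf E\|Y(s)\|_{W^{\gamma,p}(Q)}^k\big)^{1/k}$, uniformly in $T$. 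The constraint $\gamma\ge d/p$ that makes this possible forces $1-2\alpha\ge d/p$, i.e.\ $\alpha\le\tfrac12-\tfrac d{2p}$; letting $\alpha\uparrow\tfrac12-\tfrac d{2p}$, $\gamma\uparrow1-2\alpha$, $\delta\downarrow0$ gives $\mathbf E\|u\|_{C^\theta(Q_T)}^k\le C(k,\theta)$ uniformly in $T$ for every $\theta<\tfrac12-\tfrac d{2p}$, and undoing the substitution yields \eqref{est2}. The main obstacle is exactly this bookkeeping: one must check that kernel integrability ($\alpha>0$), convergence of the BDG bound for $Y$ ($\alpha<\tfrac12$ and $\gamma+2\alpha<1$), and the embedding of the target space into $C^\theta$ can be met simultaneously for every $\theta$ strictly below $\tfrac12-\tfrac d{2p}$, with all constants uniform up to $\partial Q$ and in $T$. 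This is the step in which $p>d$ is used essentially, and the borderline $p=d$ --- where the exponent degenerates to $0$ --- is necessarily lost.
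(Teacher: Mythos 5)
Your sketch is correct in outline, but it takes a genuinely different route from the paper. The paper never leaves the scalar setting: it represents $u$ by the stochastic convolution, bounds the space and time increments of the integrand \emph{pointwise} via the Green--function/semigroup estimates of Lemma~\ref{lem1} --- for Theorem~\ref{thm2} one simply feeds in the $L^q$-versions \eqref{eqa3}--\eqref{eqa4}, whose extra factor $t^{-d/(2q)}$ makes the quadratic-variation integral $\int_0^t(t-\tau)^{-2\theta-d/p}e^{-2c(t-\tau)}\,d\tau$ converge exactly when $\theta<\tfrac12-\tfrac d{2p}$ --- and then applies the one-dimensional Burkholder--Davis--Gundy inequality to obtain the increment moment bound \eqref{est15}; the passage from \eqref{est15} to moments of $\|u\|_{L^\infty(Q_T)}$ and $\|u\|_{C^\theta(Q_T)}$ is an explicit dyadic chaining argument (Lemmas~\ref{lem3}--\ref{lem5}) with constants uniform in $T$ thanks to the exponential decay of the semigroup. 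You instead invoke the Da Prato--Zabczyk factorisation identity, BDG for $L^p(Q)$-valued stochastic integrals, analytic-semigroup smoothing on the fractional Sobolev scale $W^{\gamma,p}(Q)$, and the embedding $W^{\gamma,p}\hookrightarrow C^\theta$ for $\gamma-d/p>\theta$; your exponent bookkeeping is right (the binding constraint is $\gamma+2\alpha<1$ against $\gamma+2\alpha>d/p+\theta$ and $\alpha>\theta$, which is feasible precisely for $\theta<\tfrac12-\tfrac d{2p}$ when $p>d$), the reduction to $c\leqslant 0$ by the substitution $e^{-\overline c t}u$ matches the paper's Remark, and deducing (a) from (b) is legitimate since $p>d$ makes the admissible range of $\theta$ nonempty. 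What your route buys is modularity and the off-the-shelf character of each ingredient (and, as you note, a better purely spatial exponent, close to $1-d/p$, as a by-product); what the paper's route buys is that it is elementary and self-contained --- no vector-valued stochastic calculus, no fractional Sobolev spaces --- and that it yields the moments of the joint space--time H\"older norm on $Q\times[T,T+1]$ directly. The one point you should make explicit if you write this up is the quantitative form of the factorisation lemma you rely on, namely that $\mathcal R_\alpha$ maps $L^m(0,T;W^{\gamma,p}(Q))$ boundedly into $C^{\theta}(Q_T)$ once $m>1/\alpha$ is large enough, since that is what converts the bound $\sup_s\mathbf E\|Y(s)\|^k_{W^{\gamma,p}}\leqslant C$ into $\mathbf E\|u\|^k_{C^\theta(Q_T)}\leqslant C$.
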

\begin{remark}{\rm
If the zero-order term $c(x)$ of the operator $A$ is non-positive, then the exponential
factor disappears in all the estimates \eqref{est1}--\eqref{est2} above.
The case of a generic operator that satisfies our assumptions can be reduced to the case
from Remark 1 by means of the following change of the unknown function $U = \exp(-\alpha t)u$,
with a proper choice of $\alpha$.}
\end{remark}

From now on we assume without loss of generality that $c(x)\leqslant 0$.

For a uniformly elliptic divergence form operator $A$ with bounded measurable coefficients
one has the following result.
\begin{theorem}\label{thm3}
  Let $A$ be a uniformly elliptic divergence form operator with bounded coefficients.
Then under the assumption $\mathbf{B}^\infty$,\\
(a) for any $k\geqslant 0$ and any $T\geqslant 0$, the solution of problem \eqref{ori_spde} satisfies the estimate
$$
\mathbf{E}\,\|u\|^k_{L^\infty(Q_T)}\leqslant C_k;
$$
(b) the solution $u(x, t)$ is a.s. H\"older-continuous, and there is $\alpha > 0$ depending only on
the lower and upper ellipticity constants of $A$ and on the geometry of the domain $Q$ such that
for any $T \geqslant 0$, $k \geqslant 0$, and $\theta < \alpha$ the upper bound
\begin{equation}\label{est3}
\mathbf{E}
\|u\|^k_{C^\theta(Q_T)}\leqslant C(k,\theta)
\end{equation}
holds.
Under the assumption $\mathbf{B}^p$ with $p > d$,\\
(c) there exists $\alpha_0 > 0$ such that estimate \eqref{est3} holds true for every $T \geqslant 0$, $k \geqslant 0$,
and $\theta \leqslant \alpha_0$.
\end{theorem}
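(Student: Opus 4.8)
\medskip\noindent
\emph{Proof plan for Theorem~\ref{thm3}.}\ The plan is to represent the (variational, equivalently mild) solution of \eqref{ori_spde} as a stochastic convolution against the Dirichlet Green's function $G(t,x,y)$ of $A$, and then to combine the De Giorgi--Nash--Moser regularity theory for this divergence-form operator with the Burkholder--Davis--Gundy (BDG) inequality and the Kolmogorov continuity criterion. Since $u|_{t=0}=0$,
\[
  u(t,x)=\sum_{j=1}^N\int_0^t\big\langle G(t-s,x,\cdot),\,f^j(s,\cdot)\big\rangle_{L^2(Q)}\,dw^j_s,
\]
and this integral is well defined pointwise in $x$ (trivially under $\mathbf B^\infty$, and under $\mathbf B^p$ because $\int_0^t\|G(t-s,x,\cdot)\|_{L^{p'}(Q)}^2\,ds<\infty$ precisely when $p>d$). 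As deterministic input I would use the parabolic estimates developed in the first part of the paper for divergence-form operators: Aronson-type Gaussian bounds and De Giorgi--Nash--Moser parabolic H\"older bounds for $G$, uniform up to the smooth boundary $\partial Q$. Their consequences are $\int_Q G(t,x,y)\,dy\le1$, an exponential decay $\|G(t,x,\cdot)\|_{L^1(Q)}\le Ce^{-\lambda t}$ (spectral gap; here the normalisation $c\le0$ is used), and, with the parabolic H\"older exponent $\alpha=\alpha(n,\text{ellipticity},Q)>0$,
\[
  \big\|G(t,x,\cdot)-G(t',x',\cdot)\big\|_{L^q(Q)}\le C\,t^{-\frac n2\left(1-\frac1q\right)}\min\!\Big(1,\,(\rho/\sqrt t\,)^{\alpha}\Big),\qquad\rho:=|x-x'|+|t-t'|^{1/2},
\]
for $q\in\{1,p'\}$ and $\rho\le\sqrt t$ (together with the obvious version of this bound without the H\"older factor).

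\medskip
The next step is the pointwise moment estimate. For fixed $(t,x)$ the process $\tau\mapsto\sum_j\int_0^\tau\langle G(t-s,x,\cdot),f^j(s,\cdot)\rangle\,dw^j_s$ is a continuous $L^2$-martingale with terminal value $u(t,x)$ at $\tau=t$, so BDG gives, for each integer $k$,
\[
  \mathbf E|u(t,x)|^{2k}\le C_k\,\mathbf E\Big(\int_0^t\sum_{j}\big\langle G(t-s,x,\cdot),f^j(s,\cdot)\big\rangle^2\,ds\Big)^{k}.
\]
Grouping the coefficients into the vector $F(s,y)=(f^1(s,y),\dots,f^N(s,y))$, one has $\sum_j\langle g,f^j(s)\rangle^2=\big|\int_Q g(y)F(s,y)\,dy\big|^2\le\|g\|_{L^1(Q)}^2$ under $\mathbf B^\infty$ (since $|F|\le1$), and $\le\|g\|_{L^{p'}(Q)}^2$ under $\mathbf B^p$ by H\"older's inequality. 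Taking $g=G(t-s,x,\cdot)$ and using the kernel bounds — the tail in $s$ being absorbed by the exponential decay and the singularity at $s=t$ being integrable exactly because $p>d$ — the time integral is bounded uniformly in $(t,x)$, whence $\mathbf E|u(t,x)|^{2k}\le C_k$. This is the pointwise form of \eqref{est3} and of the $L^\infty$-statements in (a) and (c).

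\medskip
For the H\"older bounds I would apply the same argument to increments. For fixed endpoints, $u(t',x')-u(t,x)$ is again the terminal value of a continuous martingale whose quadratic variation is dominated, via the previous bound, by $\int_0^{t\vee t'}\|G(t-s,x,\cdot)-G(t'-s,x',\cdot)\|_{L^q(Q)}^2\,ds$ (the kernel read as $0$ at negative times). Splitting this integral where $(t\vee t')-s\sim\rho^2$ and inserting the H\"older kernel bound (with the exponential decay controlling the remaining large-time tail), one gets
\[
  \mathbf E\,\big|u(t',x')-u(t,x)\big|^{2k}\le C_k\,\rho^{\,2k\theta}
\]
for every $\theta<\alpha$ under $\mathbf B^\infty$, and for every $\theta<\min(\alpha,1-d/p)$ under $\mathbf B^p$, the loss $d/p$ being exactly the price of replacing $\|\cdot\|_{L^1}$ by $\|\cdot\|_{L^{p'}}$; one then fixes $\alpha_0\in(0,\min(\alpha,1-d/p))$ for part (c). The Kolmogorov continuity criterion on the $(n+1)$-dimensional parameter set, applied with the BDG moment order $k$ taken large, yields an a.s.\ H\"older-continuous modification of $u$ on $Q_T$ and the bound $\mathbf E\|u\|_{C^\theta(Q_T)}^k\le C(k,\theta)$ for every $k$ (which also contains the $L^\infty$-estimates in (a), (c)). The constants are $T$-independent because $S(t)$ decays exponentially; alternatively one shifts time by $1$, writing $u$ on $[T,T+1]$ as $S(\cdot-(T-1))u(T-1)$ plus a stochastic convolution started at $T-1$, bounding the first term by the deterministic parabolic H\"older estimate for the $L^\infty$-datum $u(T-1)$ — controlled by part (a) — and the second as above.

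\medskip
The routine ingredients are the BDG inequality and Kolmogorov's criterion; the substance lies in the deterministic kernel input. \emph{The main obstacle} is to establish, and to quantify correctly, the Aronson Gaussian bounds together with the parabolic De Giorgi--Nash--Moser H\"older estimates for the Dirichlet Green's function of a divergence-form operator with merely bounded measurable coefficients, \emph{uniformly in time and up to the smooth boundary} $\partial Q$, and then to carry out the exponent bookkeeping so that $p>d$ emerges as the sharp integrability threshold in part (c) while a strictly positive H\"older exponent $\alpha_0$ survives.
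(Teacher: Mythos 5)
Your proposal is correct and follows essentially the same route as the paper: deterministic Aronson/De Giorgi--Nash--Moser regularity for the divergence-form semigroup (the paper's Lemma~\ref{lem2}, stated dually as H\"older bounds on $S_tF$ for bounded or $L^q$ data rather than as $L^{q'}$-bounds on the kernel $G(t,x,\cdot)$ and its increments), then the Burkholder--Davis--Gundy inequality for the moments of pointwise increments of the stochastic convolution, and finally a Kolmogorov-type chaining argument to pass to moments of the $L^\infty$- and $C^\theta$-norms. The only cosmetic difference is that the paper carries out the dyadic chaining and the resulting tail estimates explicitly (Lemmas~\ref{lem3}--\ref{lem5}) where you invoke the quantitative Kolmogorov criterion as a black box.
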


The proof of these theorems relies on several auxiliary statements. Let us consider the
following initial-boundary problem:
\begin{equation}\label{auxpbm1}
  \frac{\partial v}{\partial t}-Av=0,\qquad v\big|_{t=0}=F(x),\qquad v\big|_{\partial Q}=0,
\end{equation}
and denote by $S_t$ the corresponding semigroup:
$$
v(\cdot, t) = (S_tF)(\cdot), \qquad F= F(\cdot).
$$
The following statement deals with a “smooth” operator $A$.
\begin{lemma}\label{lem1}
 Let $|F(x)| < M$. Then, for any $\theta < 1$, the following estimates hold with
$c > 0$:
\begin{eqnarray}
\label{eqa1}
  \|v(\cdot,t)\|_{C^\theta(Q)} &\leqslant & c(\theta) M t^{-\frac\theta2}\exp(-ct), \\
  \label{eqa2}
  |v(x,t+\delta)-v(x,t)| &\leqslant & c(\theta) M t^{-\theta}\delta^\theta \exp(-ct).
\end{eqnarray}
Moreover, if  $\|F\|_{L^q(Q)} \leqslant M$ and $q > 1$, then
\begin{eqnarray}
\label{eqa3}
  \|v(\cdot,t)\|_{C^\theta(Q)} &\leqslant & c(\theta) M t^{-\frac\theta2-\frac d{2q}}\exp(-ct), \\
  \label{eqa4}
  |v(x,t+\delta)-v(x,t)| &\leqslant & c(\theta) M t^{-\theta-\frac d{2q}}\delta^\theta \exp(-ct).
\end{eqnarray}
\end{lemma}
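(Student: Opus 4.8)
The plan is to reduce everything to the classical interior and global parabolic regularity theory for the semigroup $S_t$, which is legitimate here because $A$ has smooth coefficients and $c(x)\leqslant 0$, so that $S_t$ is exponentially stable on $L^2(Q)$ and on $C(\overline Q)$. The starting point is the smoothing estimate for the Dirichlet heat-type semigroup: for $0<t\leqslant 1$ one has $\|S_tF\|_{C^{2\theta}(Q)}\leqslant c(\theta)t^{-\theta}\|F\|_{L^\infty(Q)}$ (for instance via the kernel bounds $|\partial_x^\alpha K(t,x,y)|\leqslant c\,t^{-(n+|\alpha|)/2}\exp(-|x-y|^2/ct)$, or via analytic-semigroup interpolation between $D(A)$ and $L^\infty$), together with the exponential decay $\|S_t\|_{C\to C}\leqslant e^{-ct}$. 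Splitting $S_t=S_{t/2}S_{t/2}$ and applying the decay to the first factor and the smoothing to the second gives, for all $t>0$,
\[
\|v(\cdot,t)\|_{C^{2\theta}(Q)}\leqslant c(\theta)\,M\,t^{-\theta}\exp(-ct),
\]
and relabeling $2\theta\rightsquigarrow\theta$ yields \eqref{eqa1}. The $L^q$ version \eqref{eqa3} is obtained the same way, except the first factor $S_{t/2}$ is now used as a map $L^q(Q)\to L^\infty(Q)$, which costs an extra $t^{-d/(2q)}$ by the standard ultracontractivity bound $\|S_t\|_{L^q\to L^\infty}\leqslant c\,t^{-d/(2q)}$ (Nash–Moser, or again the kernel bound).

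For the time-increment estimates \eqref{eqa2} and \eqref{eqa4}, I would write
\[
v(x,t+\delta)-v(x,t)=\bigl(S_\delta-\mathrm{Id}\bigr)v(\cdot,t)\big|_x
=\int_0^\delta \partial_s S_s v(\cdot,t)\,ds\big|_x=\int_0^\delta A S_s v(\cdot,t)\,ds\big|_x,
\]
and estimate using the moment inequality for analytic semigroups $\|(S_\delta-\mathrm{Id})g\|_{L^\infty}\leqslant c\,\delta^\theta\|g\|_{C^{2\theta}}$ valid for $0<\theta<1$ (this is the standard bound $\|(S_\delta-\mathrm{Id})A^{-\theta}\|\leqslant c\delta^\theta$ on the appropriate domain, obtained by interpolation). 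Applying this with $g=v(\cdot,t)$ and inserting \eqref{eqa1} (resp. \eqref{eqa3}) for $\|v(\cdot,t)\|_{C^{2\theta}(Q)}$ produces exactly \eqref{eqa2} (resp. \eqref{eqa4}), with the exponential factor carried along from the $t$-estimate. Alternatively one can avoid fractional powers entirely by using the interpolation inequality $\|(S_\delta-\mathrm{Id})g\|_{L^\infty}\leqslant 2\|g\|_{L^\infty}$ and $\|(S_\delta-\mathrm{Id})g\|_{L^\infty}\leqslant\delta\|Ag\|_{L^\infty}$, interpolated, but then one must first pay for $\|Ag\|_{L^\infty}$ via a second application of the smoothing estimate, which gives the same powers.

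The main technical point to be careful about is the behaviour near the boundary $\partial Q$: the Hölder estimates in $C^\theta(\overline Q)$ — not merely interior estimates — are needed, and the time-increment estimate must hold uniformly up to $\partial Q$. This is where the Dirichlet boundary condition and the smoothness of $\partial Q$ enter: one invokes the boundary Schauder/kernel estimates for the Dirichlet problem (the Green kernel satisfies the same Gaussian derivative bounds up to the boundary since $Q$ is smooth), which is what makes $S_t$ map $L^\infty(Q)$ boundedly into $C^{2\theta}(\overline Q)$ with the stated $t$-power. The exponential factor $\exp(-ct)$ is not an obstacle: it comes for free from the spectral gap of $-A$ with Dirichlet conditions under $c(x)\leqslant 0$, and it is only this decay, not the precise constant, that is used later. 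The restriction $\theta<1$ is exactly the threshold at which the moment inequality $\|(S_\delta-\mathrm{Id})A^{-\theta}\|\leqslant c\delta^\theta$ degenerates, so it is sharp for this method.
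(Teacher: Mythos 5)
Your proposal is correct in substance and arrives at the same powers of $t$ and $\delta$, but it is packaged differently from the paper. The paper works concretely: it takes the Green-function bound $\|G(x,\cdot,t)\|_{L^p(Q)}\leqslant c\,t^{-d/(2q)}$ from Friedman to get the $L^q\to L^\infty$ smoothing \eqref{est_fund2} (your ultracontractivity step), and then — instead of quoting analytic-semigroup interpolation or kernel derivative bounds — it obtains the gradient and time-derivative estimates \eqref{est8}, \eqref{est9} by a dilation argument: rescale $Q$ to $Q_\gamma=\gamma Q$, apply the standard parabolic estimate $|\partial_t V(x,1)|+|\nabla_x V(x,1)|\leqslant c\|V(\cdot,\tfrac12)\|_{L^\infty}$ with a constant uniform in $\gamma$, and scale back to get $t|\partial_t v|+t^{1/2}|\nabla_x v|\leqslant c\|v(\cdot,t/2)\|_{L^\infty}$. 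The H\"older bounds in $x$ then come from the elementary interpolation $\|v\|_{C^\theta}\leqslant 2\|v\|_{L^\infty}^{1-\theta}\|v\|_{C^1}^{\theta}$, and the time increments from interpolating $|v(x,t+\delta)-v(x,t)|\leqslant 2\|v\|_{L^\infty}$ against $\delta\sup|\partial_t v|$ — which is exactly your ``alternative'' route that avoids fractional powers. What your abstract formulation buys is brevity and independence from the explicit rescaling; what the paper's route buys is that uniformity of the constants up to $\partial Q$ (your own flagged concern) is handled transparently by the dilation, and that the whole argument stays at the level of classical Schauder/Green-function estimates.

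One formulation in your primary route needs repair: the inequality $\|(S_\delta-\mathrm{Id})g\|_{L^\infty}\leqslant c\,\delta^{\theta}\|g\|_{C^{2\theta}}$ is \emph{false} for the Dirichlet semigroup as a bound on all of $C^{2\theta}(\overline Q)$ — take $g\equiv 1$, for which $S_\delta g$ vanishes on $\partial Q$, so the left side stays of order $1$ as $\delta\to 0$. The correct statement is the moment inequality $\|(S_\delta-\mathrm{Id})A^{-\theta}\|\leqslant c\,\delta^{\theta}$, i.e.\ one must measure $g$ by $\|A^{\theta}g\|_{L^\infty}$ on $D(A^{\theta})$, whose elements respect the boundary condition; this suffices here because $g=v(\cdot,t)=S_tF$ lies in the domain and $\|A^{\theta}S_tF\|_{L^\infty}\leqslant c\,t^{-\theta}Me^{-ct}$. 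Since your parenthetical already names the fractional-power version and your alternative via $\|Ag\|_{L^\infty}$ is sound (and is the paper's actual argument), this is a fixable slip rather than a gap in the method.
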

\begin{proof}
Denoting by $G(x, y, t)$ the Green function associated with problem \eqref{auxpbm1}, we have
\begin{equation}\label{est_fund1}
  \|G(x, \cdot, t)\|_{L^p(Q)}\leqslant c(p)t^{-d/(2q)},\qquad \frac1p+\frac1q=1
\end{equation}
(see, for example, \cite{Fried}). This implies the inequality
\begin{equation}\label{est_fund2}
\|v(\cdot, t)\|_{L^\infty(\mathbb R^d)} \leqslant c(p) t^{-d/(2q)}
\|F\|_{L^q(\mathbb R^d)}
\end{equation}
for any $F \in L^q(\mathbb R^d)$.

In addition to problem \eqref{auxpbm1}, we consider in the domains $Q_\gamma = \gamma Q = \{\gamma x: x \in Q\}$, 
$\gamma \geqslant 1$, the initial-boundary problems
\begin{equation}\label{auxpbm2}
\begin{array}{c}
\displaystyle
\frac{\partial V}{\partial t}-  A_\gamma V = 0, \qquad x\in Q_\gamma,\\[2mm]
V\big|_{t=0}= F(\gamma x),\qquad  V\big|_{\partial Q_\gamma}= 0,
\end{array}
\end{equation}
where
$$
A_\gamma = a_{ij}(\gamma x)
\frac\partial{\partial x_i}\frac\partial{\partial x_j}
+ \gamma^{-1} b_i(\gamma x)
\frac\partial{\partial x_i}+ \gamma^{-2}
c(\gamma x).
$$
Due to the smoothness of the coefficients of $A$ and of the domain $Q$, by the standard elliptic
estimates one has
\begin{equation}\label{est6}
  \left|
\frac\partial{\partial t} V (x, 1)
\right|+
\left| \nabla_xV (x,1)\right|\leqslant
c \left\|V(\cdot,\frac12)\right\|_{L^\infty(Q_\gamma)}
\end{equation}
uniformly in $x \in Q_\gamma$, with the constant $c$ that does not depend on $\gamma$ (see \cite{LSU}, \cite{GT}).
After an appropriate rescaling, the last estimate reads
\begin{equation}\label{est7}
t\left| \frac\partial{\partial t}v(x,t)\right| + t^\frac12\left|\nabla_xv(x,t)\right| \leqslant
c \Big\| v\big(\cdot,\frac t2\big)\Big\|_{L^\infty(Q)}
\end{equation}
for all $t$, $0 < t \leqslant 1$. From \eqref{est_fund2} and \eqref{est7} it follows that
\begin{equation}\label{est8}
\left| \frac\partial{\partial t}v(x, t)\right|
\leqslant ct^{-1-\frac d{2q}}
 \|F\|_{L^q(Q)},
\end{equation}
\begin{equation}\label{est9}
\left| \frac\partial{\partial x}v(x, t)\right|
\leqslant ct^{-\frac12-\frac d{2q}}
 \|F\|_{L^q(Q)},
\end{equation}
for all $t \leqslant 1$. Finally, we use the interpolation inequality\footnote{Denoting $\|v\|_{L^\infty}$ by $\gamma_0$ and $\|v\|_{C^1}$ by
$\gamma_1$, we have $|v(x+y)-v(x)|\leqslant 2\gamma_0$  and     $|v(x+y)-v(x)|\leqslant 2\gamma_1|y|$. Raising these inequalities to the degrees $(1 - \theta)$ and $\theta$, respectively, and multiplying the new
ones, we get that $|v(x + y) - v(x)| \leqslant 2^{1-\theta} \gamma_0^{1-\theta}\gamma_1^\theta |y|^\theta$.
This relation implies the required inequality.}
$$
\|v\|_{C^\theta}\leqslant 2\|v\|^{1-\theta}_{L^\infty}\|v\|^\theta_{C^1}
$$
to derive \eqref{eqa1}, \eqref{eqa2}, and \eqref{eqa3}, \eqref{eqa4} from \eqref{est_fund2},  \eqref{est8}, and \eqref{est9}. For large $t$ the required estimates follow from the exponential decay of the Green function. This completes the proof of the lemma.
\end{proof}

For operators $A$ with measurable coefficients we have the following result.
\begin{lemma}\label{lem2}
 Let $A$ be a divergence form uniformly elliptic operator with bounded coefficients.
Then there exists $\alpha > 0$ depending only on the ellipticity constants of $A$ and on the
domain $Q$, such that for any $\theta < \alpha$ a solution $v$ of problem \eqref{auxpbm1}  admits all the estimates of
Lemma \ref{lem1}.
\end{lemma}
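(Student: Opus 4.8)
The plan is to repeat the proof of Lemma~\ref{lem1} essentially word for word, replacing its two ingredients that used the smoothness of the coefficients of $A$ --- the $L^q\to L^\infty$ smoothing \eqref{est_fund1}--\eqref{est_fund2} and the interior gradient/time-derivative bounds \eqref{est6}--\eqref{est9} --- by the corresponding facts available for divergence-form operators with bounded measurable coefficients. For the smoothing estimate one uses the Gaussian (Aronson-type) bound for the fundamental solution of $\partial_t-A$ with homogeneous Dirichlet data in $Q$: the associated Green function obeys $0\le G(x,y,t)\le c\,t^{-d/2}\exp\!\big(-|x-y|^2/(ct)\big)$, hence $\|G(x,\cdot,t)\|_{L^p(Q)}\le c(p)\,t^{-d/(2q)}$ with $1/p+1/q=1$, so that \eqref{est_fund1} and \eqref{est_fund2} hold without change. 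Together with the maximum principle $\|v(\cdot,t)\|_{L^\infty(Q)}\le M$ (valid when $|F|<M$, since we have assumed $c\le0$), this supplies the decay of $\|v(\cdot,t)\|_{L^\infty}$ needed in all four estimates \eqref{eqa1}--\eqref{eqa4}.

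For the second ingredient I would invoke the parabolic De~Giorgi--Nash--Moser theory (see, e.g., \cite{LSU}): there is $\alpha\in(0,1)$, depending only on the ellipticity constants of $A$ and on $Q$, such that any solution of $\partial_tV-A_\gamma V=0$ in $Q_\gamma\times(\tfrac12,1]$ vanishing on $\partial Q_\gamma$ satisfies the parabolic oscillation estimate
$$
\big|V(x,s)-V(y,\tau)\big|\ \le\ c\,\big\|V(\cdot,\tfrac12)\big\|_{L^\infty(Q_\gamma)}\big(|x-y|+|s-\tau|^{1/2}\big)^{\alpha},\qquad (x,s),(y,\tau)\in\overline{Q_\gamma}\times[\tfrac34,1],
$$
with $c$ independent of $\gamma\ge1$; uniformity in $\gamma$ holds because the rescaled operators $A_\gamma$ all share the ellipticity constants of $A$, and Hölder continuity up to $\partial Q_\gamma$ (where the Dirichlet data vanish) follows from the boundary version of the De~Giorgi iteration together with the smoothness of $\partial Q$. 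This estimate plays here the role of \eqref{est6}, with $C^1$ replaced by the parabolic class $C^\alpha$; this is precisely why only exponents $\theta<\alpha$ are attained.

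Undoing the parabolic scaling $x\mapsto\gamma x$, $t\mapsto\gamma^2t$ exactly as in the passage from \eqref{est6} to \eqref{est7}, one obtains, for $0<t\le1$ and $0<\delta\le t/2$,
$$
\big[v(\cdot,t)\big]_{C^{\alpha}(Q)}\ \le\ c\,t^{-\alpha/2}\big\|v(\cdot,\tfrac t2)\big\|_{L^\infty(Q)},\qquad \big|v(x,t+\delta)-v(x,t)\big|\ \le\ c\,t^{-\alpha/2}\big\|v(\cdot,\tfrac t4)\big\|_{L^\infty(Q)}\,\delta^{\alpha/2},
$$
while for $t\ge1$ the required bounds come from the exponential decay of $G$ and for $\delta>t/2$ the time-increment bound is trivial. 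Substituting $\|v(\cdot,ct)\|_{L^\infty}\le M$ (resp.\ $\le c\,t^{-d/(2q)}\|F\|_{L^q}$ by \eqref{est_fund2}), and then interpolating these $C^\alpha$/time-$C^{\alpha/2}$ estimates against the trivial bounds $\|v(\cdot,t)\|_{L^\infty}\le\|v(\cdot,\tfrac t2)\|_{L^\infty}$ and $|v(x,t+\delta)-v(x,t)|\le2\|v(\cdot,\tfrac t2)\|_{L^\infty}$ by the elementary inequality used in the footnote to Lemma~\ref{lem1}, one recovers \eqref{eqa1}--\eqref{eqa4} for every $\theta<\alpha$.

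The only real obstacle is the regularity input of the first two paragraphs: one must verify that both the Gaussian bound and the parabolic oscillation estimate are scale-invariant --- so that their constants are uniform over all $\gamma\ge1$, equivalently over all $t\le1$ --- and that the oscillation estimate holds uniformly up to the smooth boundary on which $v$ vanishes. Granting these scale-invariant statements, everything else is a verbatim transcription of the proof of Lemma~\ref{lem1}.
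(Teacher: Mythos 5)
Your proposal is correct and follows the same route as the paper: replace \eqref{est_fund1}--\eqref{est_fund2} by Aronson's bound on the Green function, and replace the smooth interior estimate \eqref{est6} by a De~Giorgi--Nash--Moser H\"older estimate $\|V\|_{C^\alpha(Q_\gamma\times[1,2])}\leqslant c\,\|V(\cdot,\tfrac12)\|_{L^\infty}$ with constants uniform in $\gamma$, then rescale and interpolate exactly as in Lemma~\ref{lem1}. The scale-invariance and boundary-regularity points you flag at the end are exactly the uniformity-in-$\gamma$ claims the paper also relies on (citing \cite{Aro}, \cite{GT}, \cite{LSU}), so nothing is missing.
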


\begin{proof}
The proof is rather similar to that of the preceding lemma and we outline it
briefly. According to \cite{Aro} the inequality \eqref{est_fund1} (and thus  \eqref{est_fund2}) is still valid. The bound  \eqref{est6} is
not valid any more but instead we can use the Nash-type estimates for the H\"older norms of
the solution $V (x, t)$ (see \cite{GT}, \cite{LSU}) to get that
$$
\|V\|_{C^\alpha(Q_\gamma\times[1,2])}  \leqslant c
\left\|V\big(\cdot,\frac12\big)\right\|_{L^\infty(Q)}
$$
for some $\alpha > 0$ and $c$ independent of $\gamma$. The rest of the proof is the same as in the previous
lemma.
\end{proof}

Now we proceed with problem \eqref{ori_spde} in a “smooth” case. Its solution does exist, is unique,
and can be written in the form (see \cite[Chapter 5]{DaPraZa} and \cite{Kry})
$$
u(t) =\int_0^t \sum\limits_{j=1}^N  S_{t-\tau} f^j(x,\tau) dw_\tau.
$$
The last formula is, in fact, the definition of a {\sl mild solution}. Clearly it is sufficient to
consider one element of the sum above. In what follows the summation sign and the index $j$
will be omitted.
We begin by obtaining point-wise estimates of increments of the solution
in $x$ variables. To this end we denote $[S_{t-\tau} f(x_1, \tau) - S_{t-\tau} f(x_2, \tau)]$ by $g(t, \tau )$ and introduce
a random variable
$$
U = u(t, x_1) - u(t, x_2) =\int_0^t g(t,\tau)dw_\tau.
$$
The quadratic characteristics of the stochastic integral is given by $X(t) =\int_0^t
g^2(t, \tau ) d\tau$.
From Lemma \ref{lem1} we derive the estimate
$$
X(t) \leqslant M^2c(\theta) |x1 - x2|^{2\theta}\int_0^t t^{-\theta}\exp(-2ct)\,dt
\leqslant  M^2c'(\theta) |x_1 - x_2|^{2\theta},
$$
valid for any $\theta < 1$. Thus, by the Burkholder–Davis–Gundy inequality we have
\begin{equation}\label{est10}
|U|^p \leqslant c_1(p)\mathbf{E}(X^{p/2}) \leqslant c(p,\theta)M^p|x_1 - x_2|^{p\theta},\qquad \theta < 1.
\end{equation}
Similarly, for a time increment we define
$$
\begin{array}{rl}
 \displaystyle
R =\!\!&\!\! u(x, t + \delta) - u(x, t) =
 \displaystyle
\int_t^{t+\delta} S_{t+\delta-\tau} f(x, \tau ) dw_\tau\\[2mm]
&\displaystyle
+\int_0^t \big(S_{t+\delta-\tau} f(x, \tau ) - S_{t-\tau} f(x, \tau )\big)dw_\tau = I_1 + I_2.
\end{array}
$$
The first integral $I_1$ can be estimated as follows:
\begin{equation}\label{est11}
\mathbf{E}|I_1|^p \leqslant c\delta^{p/2}M^p.
\end{equation}
Indeed, if we denote $S_{t+\delta-\tau} f(x, \tau)$ by $h_1(t, \tau )$, then
$$
\int_t^{t+\delta}
h^2_1(t, \tau ) \tau \leqslant M^2\delta,
$$
and required estimate \eqref{est11} is a direct consequence of the Burkholder–Davis–Gundy inequality.

In order to estimate $I_2$, we define
$$
h_2(t, \tau) = S_{t+\delta-\tau} f(x, \tau ) - S_{t-\tau} f(x, \tau ).
$$
By Lemma \ref{lem1} we have
$$
|h_2(t, \tau )| \leqslant  c(\theta)M(t - \tau )^{-\theta}\delta^\theta \exp\big(-c(t - \tau )\big)
$$
for any $\theta < \frac12$. Thus,
$$
\int_0^th_2^2(t, \tau) d\tau \leqslant M^2\delta^{2\theta}\int_0^t
(t - \tau)^{-2\theta} \exp\big(-2c(t - \tau )\big) d\tau \leqslant c(\theta)M^2 \delta^{2\theta}
$$
and by the Burkholder–Davis–Gundy inequality we have
\begin{equation}\label{est14}
\mathbf{E}|I_2|^p \leqslant c(\theta) \delta^{\theta p}M^p, \qquad \theta< \frac12.
\end{equation}
Combining  \eqref{est10},  \eqref{est11}, and \eqref{est14}, we obtain
\begin{equation}\label{est15}
  \mathbf{E}
\left|
u(x_1, t_1) - u(x_2, t_2)\right|^p
 \leqslant c(\theta)
	\big(|t_1 - t_2| + |x_1 - x_2|\big)^{\theta p}
\end{equation}
for any $p > 1$ and any $\theta < \frac 12$.

\section{ Boundedness and H\"older-continuity of the solution; higher momenta.}
\label{sec3}

This section deals with the estimation of $L^\infty$
and $C^\theta$  norms of the solution of problem \eqref{ori_spde}.
The first statement here is given by the following lemma.
\begin{lemma}\label{lem3}
  Let the functions $f^j(x, t)$ satisfy condition $\mathbf{B}^\infty$
of the preceding section. Then
\begin{itemize}
  \item [{\rm (a)}] there is a.s. a continuous version of the solution $u(x, t)$ of problem \eqref{ori_spde};
  \item [{\rm (b)}] under this choice of $u(x, t)$, for any $T \geqslant 0$ and any $p \geqslant 1$, we have
$$
\mathbf{E}\|u\|^p_{L^\infty(Q\times[T,T+1])} \leqslant c(p) < \infty.
$$
\end{itemize}
\end{lemma}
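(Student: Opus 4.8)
The plan is to obtain continuity and the $L^\infty$-moment bound simultaneously from the Kolmogorov continuity criterion applied to the moment estimate \eqref{est15} already established in Section \ref{sec_setting}. First I would observe that \eqref{est15} gives, for every $p>1$ and every $\theta<\tfrac12$,
$$
\mathbf{E}\big|u(z_1)-u(z_2)\big|^p \leqslant c(p,\theta)\,|z_1-z_2|^{\theta p},\qquad z_i=(x_i,t_i)\in Q\times[T,T+1],
$$
where $|z_1-z_2|$ denotes the parabolic (or simply Euclidean) distance on $Q\times[T,T+1]\subset\mathbb R^{n+1}$. Since $p$ can be taken arbitrarily large while keeping $\theta$ close to $\tfrac12$, the exponent $\theta p$ exceeds the dimension $n+1$ of the parameter space by an arbitrarily large margin. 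The classical Kolmogorov–Chentsov theorem (see e.g. \cite[Chapter 3]{DaPraZa} or any standard reference) then yields a continuous modification of $u$ on $Q\times[T,T+1]$; this proves part (a). Because $Q\times[0,\infty)$ is covered by countably many such slabs and the modifications agree a.s. on overlaps, we get a single a.s.-continuous version on all of $Q\times(0,\infty)$, and continuity up to $t=0$ follows from the zero initial data together with \eqref{est11}, \eqref{est14}.

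For part (b), the quantitative form of Kolmogorov's theorem gives more: fixing $\theta'<\theta<\tfrac12$ and choosing $p$ large enough that $\theta p>n+1+\theta' p$, the theorem provides a bound of the form
$$
\mathbf{E}\,\big[u\big]_{C^{\theta'}(Q\times[T,T+1])}^{p} \leqslant C\Big(p,\theta',\mathrm{diam}(Q\times[T,T+1])\Big)\,\sup_{z_1\neq z_2}\frac{\mathbf{E}|u(z_1)-u(z_2)|^p}{|z_1-z_2|^{\theta p}}<\infty,
$$
where $[\,\cdot\,]_{C^{\theta'}}$ is the Hölder seminorm. Since $u$ vanishes on $\partial Q$, and in particular on the hyperplane $t=T$ after the translation $t\mapsto t-T$ one controls $u(x,T)$ via \eqref{est11} and \eqref{est14} applied from time $T$, the $L^\infty$-norm on the slab is dominated by the Hölder seminorm times a fixed power of the diameter plus the sup of $|u(\cdot,T)|$; hence $\mathbf{E}\|u\|_{L^\infty(Q\times[T,T+1])}^p$ is finite as well. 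Raising to powers and using that all moments are comparable (Jensen / the fact that $p$ was arbitrary) gives the stated bound $\mathbf{E}\|u\|_{L^\infty(Q\times[T,T+1])}^p\leqslant c(p)$ for every $p\geqslant1$, with $c(p)$ independent of $T$ because the constant in \eqref{est15} is.

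The main obstacle is purely bookkeeping rather than analytic: one must make sure that the constant produced by Kolmogorov's criterion does not depend on $T$, which requires noting that the right-hand side of \eqref{est15} is $T$-independent (this uses $c(x)\leqslant0$, already assumed, so that no exponential growth enters), and that the slab $Q\times[T,T+1]$ has $T$-independent diameter. A secondary technical point is extracting the genuine $L^\infty$-bound from the Hölder seminorm, which needs a base point where $u$ is controlled; the boundary condition $u|_{\partial Q}=0$ supplies such a base point on every time slice, so the supremum is bounded by the oscillation, i.e. by the Hölder seminorm times $\mathrm{diam}(Q)^{\theta'}$. No new estimates beyond those of Section \ref{sec_setting} are needed.
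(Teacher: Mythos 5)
Your proposal is correct and follows essentially the same route as the paper: part (a) is obtained from \eqref{est15} via the Kolmogorov criterion in both cases, and for part (b) the paper's explicit dyadic chaining argument (the events $\mathcal{A}^{n,e}_{k,q}$, the union bound, and the telescoping sum anchored at the boundary point $x(0)=0$ where $u$ vanishes) is precisely the proof of the quantitative Kolmogorov--Chentsov bound on the H\"older seminorm that you invoke as a black box, combined with the same use of the Dirichlet condition to convert an oscillation bound into an $L^\infty$ bound. The only presentational differences are that the paper derives a tail estimate $\mathbf{P}\{\|u\|_{L^\infty(Q^T)}\geqslant K\}\leqslant c_p(K+1)^{-p}$ and integrates, whereas you pass directly through moments of the seminorm; both are sound and $T$-uniform for the reason you give.
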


\begin{proof}
 The first statement follows immediately from \eqref{est15} by the Kolmogorov theorem;
it just suffices to choose a sufficiently large $p$.

To prove the second one we consider a cylinder $Q^T = Q\times[T,T +1]$ and define a sequence
of sets
$$
\mathcal{K}_n =\{
k \in \mathbb Z^{d+1}
\big|
k2^{-n} \in Q^T\},\qquad
n= 1, 2, \ldots.
$$
For an arbitrary vector $e \in \mathbb Z^{d+1}$ such that
$$
|e|_1 \sim \max\limits_
{1\leqslant j\leqslant d+1}|e_j | = 1,
$$
we then set $\xi_k^{n,e} = |u((k + e) 2^{-n}) - u(k2^{-n})|$. By Lemma \ref{lem1} we have
\begin{equation}\label{est18}
\mathbf{E}|\xi_k^{n,e}|^p \leqslant c(\theta) 2^{-np\theta},\qquad 0 < \theta <\frac12,
\end{equation}
for any $p \geqslant 1$. Let us introduce the events
$$
\mathcal{A}^{n,e}_{k,q} =
\left\{\omega\in \Omega \,|\, \xi_k^{n,e}\geqslant Kq^n\right\}.
$$
From \eqref{est18} by the Chebyshev inequality we get
$$
\mathbf{P}(\mathcal{A}^{n,e}_{k,q})\leqslant \frac{\mathbf{E}|\xi_k^{n,e}|^p}{K^p q^{np}}
\leqslant  c(\theta)\frac{2^{-np\theta}}{K^p q^{np}}.
$$
For each $n$ the total number of the events $\mathcal{A}^{n,e}_{k,q}$
 is not greater than $2^{\bar c(d,Q)n}$. Thus the
probability of the union $\mathcal{A}^n_q=\bigcup_{k\in\mathcal{K}}\big(\bigcup_{|e|_1=1}\mathcal{A}^{n,e}_{k,q}\big)$
satisfies the estimate
$$
\mathbf{P}(\mathcal{A}^n_q) \leqslant c(\theta)K^{-p}q^{-pn}
2^{\bar c(d,Q) n-\theta pn} = c(\theta)K^{-p}
\alpha^n,
$$
where $\alpha = 2^{\bar c(d,Q)}/(2^\theta q)^p$. If we set $\theta = \frac13$,
 $ q = 2^{-1/6}$, then $\alpha = 2\bar c(d,Q)/2^{p/6}$. Finally, taking
$p > 6\bar c(d,Q) + 1$, we have $\alpha < \frac12$.
Hence, for the probability of the event $\mathcal{A} =\bigcup_{n>1}\mathcal{A}^n_q$
the following estimate holds true:
\begin{equation}\label{est30}
\mathbf{P}(\mathcal{A}) \leqslant c(\theta)K^{-p}.
\end{equation}

From now on we assume without loss of generality that $Q$ is a $d$-dimensional cube $(0, 1)^d$.
Indeed, to reduce the case of a general domain to that of a cube it suffices to extend the
function $u(x, t)$ in the exterior of $Q$ as zero and to rescale the arguments if necessary.

Next, any point of $Q$ can be represented in the form
$$
x =\sum\limits_{j=1}^\infty
e(j) 2^{-j},\qquad |e(j)|_1 \leqslant 1.
$$
Let us denote $x(m) =
\sum_{j=1}^m e(j) 2^{-j}$
 (as usual, $x(0) = 0$). Clearly, $u(x(0), t) = 0$. Then, by the definition of the set $\mathcal{A}^{n,e}_{k,q}$,
  for any $\omega \not\in \mathcal{A}$ we have
$$
\big|
u(x(m), t) - u(x(m + 1), t)\big|
\leqslant Kq^m = K2^{-m/6}.
$$
Therefore,
\begin{equation}\label{est31}
|u(x, t)| \leqslant K
\sum\limits_{m=1}^\infty 
2^{-m/6} = K2^{1/6}(2^{1/6} - 1).
\end{equation}
It now follows from \eqref{est30} and \eqref{est31} that
$$
\mathbf{P}
\big\{
\|u\|_{L^\infty(Q^T)}
 \geqslant K\big\}
\leqslant c_p(K + 1)^{-p}
$$
for any $p > \bar c(d,Q)$. Finally, denoting $U_T = \|u\|_{L^\infty(Q^T)}$, we find that
$$
\mathbf{E}U^s_T =
\int_0^\infty x^s d\mathbf{P}
\{U_t \leqslant x\} = s
\int_0^\infty  x^{s-1}\mathbf{P}\{U_t \leqslant x\}dx
$$
$$
\leqslant 
 sc_p
\int_0^\infty  x^{s-1}(x + 1)^{-p} dx < \infty
$$
if $p > s + 1$. The lemma is proved.
\end{proof}

In order to estimate the H\"older norms of $u(x, t)$, let us first formulate the following
simple assertion.
\begin{lemma}\label{lem4}
Let a function $g(y)$ satisfy the estimate
\begin{equation}\label{est_cond}
  \mathop{\mathrm{osc}}\limits_J g(\cdot)\leqslant  \gamma_n
\end{equation}
in any small cube $J$ which is a mesh of the grid $2^{-n} \mathbb Z^{d+1}$, i.e., in any 
$J = 2^{-n}j +[0, 2^{-n}]^{d+1}$,
where $j \in \mathbb Z^{d+1}$. Then, for any ${\scriptstyle\Delta} \in \mathbb R^{d+1}$, one has
$$
\big| g(y +{\scriptstyle\Delta}) - g(y)\big|\leqslant 2\gamma\big._{[\log_2(1/|{\scriptstyle\Delta}|)]},
$$
where $[\cdot]$ stands for the integer part.
\end{lemma}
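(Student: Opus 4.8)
The plan is to reduce everything to the single scale $n=[\log_2(1/|{\scriptstyle\Delta}|)]$. With this choice one has $n\le\log_2(1/|{\scriptstyle\Delta}|)$, hence $2^{-n}\ge|{\scriptstyle\Delta}|$, and writing ${\scriptstyle\Delta}=(\Delta_1,\dots,\Delta_{d+1})$ every coordinate satisfies $|\Delta_i|\le|{\scriptstyle\Delta}|\le 2^{-n}$. The key geometric observation I would make is that $y$ and $y+{\scriptstyle\Delta}$ then lie in two (closed) meshes of the grid $2^{-n}\mathbb Z^{d+1}$ whose intersection is nonempty; granting this, the assertion is immediate from the oscillation hypothesis \eqref{est_cond}.

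To establish the geometric observation, for each $i$ I would set $k_i=\lfloor 2^n y_i\rfloor$ and $k_i'=\lfloor 2^n(y_i+\Delta_i)\rfloor$, so that $y$ belongs to the mesh $J=2^{-n}k+[0,2^{-n}]^{d+1}$ and $y+{\scriptstyle\Delta}$ to $J'=2^{-n}k'+[0,2^{-n}]^{d+1}$, where $k=(k_1,\dots,k_{d+1})$ and $k'=(k_1',\dots,k_{d+1}')$. Since $2^n y_i\in[k_i,k_i+1)$ and $|2^n\Delta_i|\le 1$, the real number $2^n(y_i+\Delta_i)$ lies in $[k_i-1,k_i+2)$, so that $k_i'\in\{k_i-1,k_i,k_i+1\}$; thus $|k_i-k_i'|\le 1$ for every $i$. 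Now, for two integers with $|k_i-k_i'|\le1$ the closed intervals $[2^{-n}k_i,2^{-n}(k_i+1)]$ and $[2^{-n}k_i',2^{-n}(k_i'+1)]$ meet, and taking the product over $i$ shows $J\cap J'\ne\emptyset$; pick $p\in J\cap J'$.

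It remains to combine things: the pair $y,p$ lies in the single mesh $J$ and the pair $y+{\scriptstyle\Delta},p$ lies in the single mesh $J'$, so \eqref{est_cond} gives $|g(y)-g(p)|\le\mathop{\mathrm{osc}}_{J}g\le\gamma_n$ and $|g(y+{\scriptstyle\Delta})-g(p)|\le\mathop{\mathrm{osc}}_{J'}g\le\gamma_n$, and the triangle inequality yields $|g(y+{\scriptstyle\Delta})-g(y)|\le 2\gamma_n$, which is the claim. I do not expect any serious obstacle here; the only point needing a line of care is the discrete bookkeeping that two meshes whose indices differ by at most one in each coordinate always share a corner, so that the intermediate point $p$ exists. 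Edge cases such as $|{\scriptstyle\Delta}|\ge 1$ — where $n\le 0$ — are covered by the same argument applied to the corresponding coarse grid.
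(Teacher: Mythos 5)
Your proposal is correct and follows exactly the paper's argument: at scale $n=[\log_2(1/|{\scriptstyle\Delta}|)]$ the points $y$ and $y+{\scriptstyle\Delta}$ lie in the same or in adjacent closed meshes sharing a common point, and the triangle inequality through that point gives the factor $2$. You have merely written out in full the discrete bookkeeping that the paper dismisses with ``clearly''; no discrepancy.
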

\begin{proof}
For $n = [\log_2(1/|{\scriptstyle\Delta}|)]$ let us consider the family of closed cubes defined above.
Clearly, both $y$ and $y +{\scriptstyle\Delta}$  either belong to the same cube or they are situated in adjacent
cubes having at least one point in common. So the statement of the lemma follows.
\end{proof}

Next, for a solution of \eqref{ori_spde} and for any $\omega \not\in \mathcal{A}$, inequality \eqref{est_cond} holds with $\gamma_n = Kq^n$.
Thus,
\begin{eqnarray*}
\big|u((x, t) + {\scriptstyle\Delta}) - u(x, t)\big|
&\leqslant& 2Kq^{[\log_2(1/|\Delta|)]} \leqslant 2Kq^{\log_ 2(1/|\Delta|)-1}\\
&=& 2Kq^{-1} 2^{\log_2(q) \log_2(1/|\Delta|)} = 2Kq^{-1}|{\scriptstyle\Delta}|^{\log_2(1/q)}.
\end{eqnarray*}
If we put $q = 2^{-\theta_1}$  with $\theta_1 < \theta < \frac12$, then $\log_2(1/q) = \theta_1$, and the above inequality implies
$$
\big|u((x, t) + {\scriptstyle\Delta}) - u(x, t)\big|
\leqslant 2K2^{\theta_1}|{\scriptstyle\Delta}|^{\theta_1}  \leqslant 4K|{\scriptstyle\Delta}|^{\theta_1}.
$$
Therefore, for all $\omega \not\in \mathcal{A}$,
$$
\|u\|_{C^{\theta_1}(Q^T)} \leqslant 4K.
$$
Taking into account \eqref{est30} we derive from the last estimate that
$$
\mathbf{P}\left\{\|u\|_{C^{\theta_1}(Q^T)} > K\right\}\leqslant c(p, \theta_1)K^{-p}
$$
for all $p > 1$. Finally, for any $\theta_1 < \frac12$ and any $m \geqslant 1$ the bound
$$
\mathbf{E}\|u\|^m_{C^{\theta_1}(Q^T)} \leqslant c(m, \theta_1)
$$
can be proved in the usual way, and we arrive at the following assertion.
\begin{lemma}\label{lem5}
For any $\theta < \frac12$ a solution $u(x, t)$ of problem \eqref{ori_spde} is a.s. a $C^\theta$-function.
Moreover, for any cylinder $Q^T$ and any $m \geqslant 1$, the inequality $\mathbf{E}\|u\|^m_{C^\theta(Q^T)} \leqslant c(m, \theta)$ holds.
\end{lemma}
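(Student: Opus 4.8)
The statement is in fact a summary of the estimates already obtained, so the plan is simply to organize them. Fix a cylinder $Q^T=Q\times[T,T+1]$ and an exponent $\theta<\tfrac12$; as in the proof of Lemma \ref{lem3} we may assume $Q=(0,1)^d$, extending $u$ by zero outside $Q$, so that after the translation $t\mapsto t-T$ the set $Q^T$ becomes a translate of the unit cube $(0,1)^{d+1}$ and $u$ vanishes at the corner $0$ because $0\in\partial Q$. Choose $\theta'$ with $\theta<\theta'<\tfrac12$.

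First I would reproduce the dyadic construction from the proof of Lemma \ref{lem3}, now feeding in the full space--time increment bound \eqref{est15} with exponent $\theta'$: on the grid $2^{-n}\mathbb Z^{d+1}$ put $\xi^{n,e}_k=|u((k+e)2^{-n})-u(k2^{-n})|$ for unit vectors $e$, so that $\mathbf E|\xi^{n,e}_k|^p\le c(\theta')2^{-np\theta'}$ for every $p>1$. With $q=2^{-\theta}$ and $\mathcal A^{n,e}_{k,q}=\{\xi^{n,e}_k\ge Kq^n\}$ the Chebyshev inequality gives $\mathbf P(\mathcal A^{n,e}_{k,q})\le c(\theta')K^{-p}2^{-np(\theta'-\theta)}$, and since the number of these events at level $n$ is $\le 2^{\bar c(d,Q)n}$, the union $\mathcal A^n_q$ satisfies $\mathbf P(\mathcal A^n_q)\le c(\theta')K^{-p}\alpha^n$ with $\alpha=2^{\bar c(d,Q)-p(\theta'-\theta)}$. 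Choosing $p>(\bar c(d,Q)+1)/(\theta'-\theta)$ forces $\alpha<\tfrac12$, hence $\mathbf P(\mathcal A)\le c(\theta')K^{-p}$ for $\mathcal A=\bigcup_{n\ge 1}\mathcal A^n_q$.

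The second step passes from increments to the norm. For $\omega\notin\mathcal A$, telescoping along a binary expansion of each point exactly as in \eqref{est31}, and within each mesh cube of the $2^{-n}$--grid, shows that $|u|\le C_d K$ on $Q^T$ and that the oscillation of $u$ over any such mesh cube is at most $C_dKq^n$; Lemma \ref{lem4} then yields $|u((x,t)+\Delta)-u(x,t)|\le 2C_dKq^{[\log_2(1/|\Delta|)]}\le C_d'K|\Delta|^{\theta}$, since $\log_2(1/q)=\theta$. Consequently $\|u\|_{C^\theta(Q^T)}\le C_d'K$ whenever $\omega\notin\mathcal A$, so that
$$
\mathbf P\big\{\|u\|_{C^\theta(Q^T)}>K\big\}\le c(p,\theta)K^{-p}\qquad(K>0),
$$
with $p$ still free to be enlarged. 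Finally, given $m\ge 1$, take $p>m$ in the construction and integrate the tail, splitting at $K=1$:
$$
\mathbf E\|u\|^m_{C^\theta(Q^T)}=m\int_0^\infty K^{m-1}\,\mathbf P\big\{\|u\|_{C^\theta(Q^T)}>K\big\}\,dK
$$
$$
\le m\int_0^1 K^{m-1}\,dK+m\,c(p,\theta)\int_1^\infty K^{m-1-p}\,dK<\infty .
$$
The a.s.\ $C^\theta$--regularity of $u$ follows either from this bound or directly from \eqref{est15} via the Kolmogorov continuity theorem (cf.\ Lemma \ref{lem3}(a)). I expect the only point requiring care to be the coupled choice of $p$: it must be large enough in terms of $\bar c(d,Q)$ and $\theta'-\theta$ to make the geometric series $\sum_n\mathbf P(\mathcal A^n_q)$ converge, and simultaneously satisfy $p>m$ for the tail integration; the remaining steps are the bookkeeping already carried out above.
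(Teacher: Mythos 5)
Your proposal is correct and follows essentially the same route as the paper: the dyadic-grid events $\mathcal A^{n,e}_{k,q}$ with Chebyshev and a union bound, the chaining/oscillation step combined with Lemma \ref{lem4} to convert grid increments into a $C^\theta$ bound off the exceptional set $\mathcal A$, the resulting tail estimate $\mathbf P\{\|u\|_{C^\theta}>K\}\leqslant cK^{-p}$, and integration of the tail for the moments. Your version is in fact slightly more explicit than the paper's at the points it glosses over (the geometric-series chaining that gives the oscillation bound $C_dKq^n$ over a mesh cube, and the coupled choice of $p$ relative to $\bar c(d,Q)$, $\theta'-\theta$ and $m$), but these are the same ideas spelled out rather than a different argument.
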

Now, to complete the proof of Theorem \ref{thm1} it is merely sufficient to refer to Lemmas \ref{lem3}
and \ref{lem5}.

Other statements of this work can be justified in the same way.

\end{document}